\documentclass[12pt, reqno]{amsart}
\usepackage{amsmath, amsthm, amscd, amsfonts, amssymb, graphicx, color}
\usepackage{setspace}
\usepackage{mathrsfs}
\usepackage{multicol}
\usepackage[bookmarksnumbered, colorlinks, plainpages]{hyperref}
\hypersetup{colorlinks=true,linkcolor=red, anchorcolor=green, citecolor=cyan, urlcolor=red, filecolor=magenta, pdftoolbar=true}

\textheight 22.5truecm \textwidth 14.5truecm
\setlength{\oddsidemargin}{0.35in}\setlength{\evensidemargin}{0.35in}

\setlength{\topmargin}{-.5cm}

\newtheorem{theorem}{Theorem}[section]

\newtheorem{corollary}[theorem]{Corollary}
\theoremstyle{definition}

\theoremstyle{remark}

\numberwithin{equation}{section}

\newcommand{\NN}{\mathbb{N}}

\newcommand{\CC}{\mathbb {C}}

\newcommand{\D}{\mathbb{D}}
\begin{document}
\title[Supercyclicity and  resolvent condition ]{ Supercyclicity and  resolvent condition for   weighted composition operators}
\author[Tesfa  Mengestie*]{Tesfa  Mengestie*} \footnote{*Corresponding author}
\address{Department of Mathematical Sciences \\
Western Norway University of Applied Sciences\\
Klingenbergvegen 8, N-5414 Stord, Norway}
\email{Tesfa.Mengestie@hvl.no}
\author [Werkaferahu Seyoum ]{Werkaferahu Seyoum}
\address{Department of Mathematics,
Addis Ababa University, Ethiopia}
\email{Werkaferahu@gmail.com}
\subjclass[2010]{Primary: 47B32, 30H20; Secondary: 46E22, 46E20, 47B33}
 \keywords{ Fock spaces, Weighted composition operators,  supercyclic, hypercyclic, Ritt resolvent condition, The unconditional Ritt's condition }

 \begin{abstract}
 For pairs of holomorphic maps $(u,\psi)$ on the complex plane, we study some dynamical properties of  the  weighted composition operator $W_{(u,\psi)}$  on the   Fock spaces.  We  prove   that  no weighted composition operator on  the   Fock spaces  is supercyclic. Conditions under which the operators satisfy the Ritt's resolvent growth  condition are also identified. In particular, we show  that a non-trivial composition operator  on the Fock spaces satisfies such a  growth condition if and only if it is compact.
\end{abstract}
\maketitle
\section{Introduction}\label{sec1}
For pairs of holomorphic maps $(u,\psi)$, several properties of   the  weighted composition operator $W_{(u,\psi)}: f\to u\cdot f\circ \psi$  in  various settings  are well understood. But there are still some   basic  structures of the operators   that require  further  investigations. In this paper, we study some dynamical structures of the operators on the   Fock spaces
 $\mathcal{F}_p, 0<p<\infty$. We recall that the space   $\mathcal{F}_p$  is the   space  consisting of all entire functions $f$ for which
 \begin{align*}
 \|f\|_{ p}^p= \frac{p}{2\pi} \int_{\CC} |f(z)|^p
e^{-\frac{p|z|^2}{2}}  dA(z) <\infty
\end{align*} where $dA$ is the usual  Lebesgue area measure on the complex plane $\CC$.

The space $\mathcal{F}_2$ is a reproducing kernel Hilbert space with kernel function $K_w(z)= e^{\overline{w}z}$ and normalized kernel  $k_w=\|K_w\|_2^{-1}K_w $. A straightforward calculation shows that for each $w\in \CC$, the function   $k_w$ belongs to   $\mathcal{F}_p$ with    $\|k_w\|_p= 1 $ for all $p$.

For each  entire function $f$,  the subharmonicity of   $|f|^p$ implies   the point estimate
 \begin{align}
\label{global}
 |f(z)|
 \leq e^{\frac{|z|^2}{2}}\bigg(\int_{D(z,1 )} |f(w)|^p
e^{-\frac{p|w|^2}{2}} dA(w)\bigg)^{\frac{1}{2}}\leq e^{\frac{|z|^2}{2}}  \|f\|_p
 \end{align} where  $D(z,1 )$ is a disc of radius $1$ and center $z$.

 \noindent
  The bounded and compact weighted composition operators on Fock spaces  were characterized  in terms of Berezin-type integral transforms in  \cite{TM4,UK} and the reference therein. Later,  Le \cite{Tle} considered the Hilbert space $\mathcal{F}_2$ and obtained a simpler condition namely  that  $W_{(u,\psi)}$ is bounded on $\mathcal{F}_2$ if and only if    $u\in \mathcal{F}_2$ and
\begin{align}
 \label{bounded}
 M(u, \psi):=\sup_{z\in \CC} |u(z)|e^{\frac{1}{2}(|\psi(z)|^2-|z|^2)} <\infty.
 \end{align} He further proved that
 \eqref{bounded} implies $
 \psi(z)= az+b$ for some complex numbers $a$ and $b$ such that $ |a|\leq 1$.   In \cite{TMMW}, T. Mengestie and M. Worku
 proved that the Berezin-type integral condition used to describe the boundedness of generalized Volterra-type integral operators $V_{(g,\psi)}$ on the Fock spaces  $\mathcal{F}_p$     is equivalent to  a simple condition similar to   \eqref{bounded}. Because of the Littlewood-Paley type description of the Fock  spaces, by  simply  replacing $|g'(z)|/(1+|z|)$ by $|u(z)|$ in the results there, it has been known   that \eqref{bounded}  in fact describes
 the bounded weighted composition operators on  all the spaces  $\mathcal{F}_p, \ 1\leq p< \infty, $  with norm bounds
\begin{align}
 \label{norm}
  M(u, \psi)\leq \|W_{(u,\psi)}\|\leq
  |a|^{-\frac{2}{p}}  M(u, \psi).\end{align}

    As indicated in \cite{Tle}, an  interesting consequence of \eqref{bounded} is that  if $|a|=1$, then a simple argument with Liouville's theorem leads to the explicit expression for the weight function
 \begin{align}
 \label{weight}
  u(z)= u(0)K_{-\overline{a}b}(z).
 \end{align}
 We note that $W_{(u,\psi)}$ can be written as the product $ M_u C_\psi$ where $ M_u $ and $C_\psi$ are respectively the multiplication and composition operators. By condition \eqref{bounded},  the operator $W_{(u,\psi)}$ can  be bounded   even if both the factors  $C_\psi$ and $M_u$ are  unbounded.  For example one can set  $u_0(z)= e^{-z}$, $\psi_0(z)= z+1$, and observe that   $W_{(u_0,\psi_0)}$ is bounded while both the factors  remain unbounded.  Similarly, compactness of $W_{(u,\psi)}$ has been described by the fact that $\psi(z)= az+b, |a|\leq 1$ and  $|u(z)|e^{\frac{1}{2}(|\psi(z)|^2-|z|^2)}\to 0$ as $|z|\to 0$.  Compactness implies that $|a|<1$.

 We conclude this section with a word on notation.  The notation
 $U(z)\lesssim V(z)$ (or
equivalently $V(z)\gtrsim U(z)$) means that there is a constant
$C$ such that $U(z)\leq CV(z)$ holds for all $z$ in the set of a
question. We write $U(z)\simeq V(z)$ if both $U(z)\lesssim V(z)$
and $V(z)\lesssim U(z)$.
\section{Supercyclic weighted composition operators on $\mathcal{F}_p$ }
The power bounded and uniformly mean ergodic dynamical properties of the weighted composition operators on $\mathcal{F}_p$ have been recently described in \cite{TW4}. One of the main objectives of this note is  to  take that  investigation further and study  the supercyclic structure and resolvent growth conditions of the operators on the spaces. We start  by  recalling certain definitions.
  A bounded linear operator $T$ on a   Banach space $\mathcal{X}$ is said to be  hypercyclic if there exists a vector $f$ in $\mathcal{X}$  for which  the orbit,  $Orb(T,f)= \{ T^n f:n\in \NN_0\}$,   is dense in  $\mathcal{X}$ where the operator $T^n$ is the n-th iterate of  $T$,  $T^0= I$    and    $I$ is the identity map on $\mathcal{X}$.   The operator is    supercyclic  if there exists a vector $f$ for which its    projective
orbit, the set of scalar multiplies of  $Orb(T,f)$,  is dense in $\mathcal{X}$. Clearly,   every hypercyclic operator is supercyclic but not conversely.

We note in passing that the dynamical properties of operators have been the  object of many investigations over  the past several  years  in part  because   these properties   intimately connected with the study of the famous  invariant subspace  problem . 

 The study of  the dynamical properties of an operator is related to the study of  its  iterates. For  the operator $W_{(u,\psi)}$ and an entire function $f$, a simple computation shows that each element of the orbit has the form
\begin{small}
\begin{align}
\label{interplay}
W_{(u,\psi)}^n f=  f(\psi^n) u_n, \ \ u_n:= \prod_{j=0}^{n-1} u(\psi^j)
\end{align}
\end{small} for each   $n\in \NN$ and $\psi^0= I$  the identity map on $\CC$. In \cite{TW2,TW1},  it was shown that the Fock spaces
support no supercyclic and hence hypercyclic composition operators.  Motivated by this, one may ask  whether  the interplay between $u$ and $\psi$  in \eqref{interplay} results in supercyclic weighted composition operators on   Fock spaces as it does for example in the case of boundedness and compactness operator theoretic structures.
  Our next main  result affirms that the spaces support no supercyclic weighted composition operator either.
\begin{theorem}\label{supercyclic}
 Let $1\leq p<\infty$ and $(u,\psi)$ be a pair  of entire functions on $\CC$ which induces a bounded  weighted composition operator $W_{(u,\psi)}$ on $\mathcal{F}_p$. Then  $W_{(u,\psi)}$ can not be  supercyclic on $\mathcal{F}_p$.
\end{theorem}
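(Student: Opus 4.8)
The plan is to first use the boundedness characterization to reduce to an affine symbol and then split the analysis according to the size of the leading coefficient. Since $W_{(u,\psi)}$ is assumed bounded, \eqref{bounded} forces $\psi(z)=az+b$ with $|a|\le 1$, and I may assume $u\not\equiv 0$ (otherwise $W_{(u,\psi)}=0$ is trivially not supercyclic). The two regimes $|a|<1$ and $|a|=1$ will be handled by genuinely different arguments, because the dynamics of the inner iterates $\psi^{n}$ are of an entirely different nature: for $|a|<1$ the maps $\psi^{n}$ contract to a fixed point, while for $|a|=1$ they are rigid (rotations and/or translations) and do not converge.

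For $|a|<1$ the affine map has an attracting fixed point $w_0=b/(1-a)$, with $\psi^{n}(w)=w_0+a^{n}(w-w_0)\to w_0$ for every $w$. The idea is to exploit that point evaluations are bounded functionals on $\mathcal{F}_p$ by \eqref{global}. Fix two distinct points $w_1,w_2$ and consider the continuous surjection $P\colon\mathcal{F}_p\to\CC^2$, $P f=(f(w_1),f(w_2))$ (surjective since $K_{w_1},K_{w_2}$ are independent). Using \eqref{interplay} to write $(W_{(u,\psi)}^{n}f)(w_i)=u_n(w_i)f(\psi^{n}(w_i))$, I would study the ratio
\[
\frac{(W_{(u,\psi)}^{n}f)(w_1)}{(W_{(u,\psi)}^{n}f)(w_2)}
=\Bigg(\prod_{j=0}^{n-1}\frac{u(\psi^{j}(w_1))}{u(\psi^{j}(w_2))}\Bigg)\frac{f(\psi^{n}(w_1))}{f(\psi^{n}(w_2))}.
\]
Because $\psi^{j}(w_i)\to w_0$ geometrically, each factor tends to a limit, so the whole expression converges in $\CC\cup\{\infty\}$; one only needs to track the orders of vanishing of $u$ and $f$ at $w_0$, choosing $w_1,w_2$ with $|w_1-w_0|\neq|w_2-w_0|$ so that the limiting direction is a single point rather than a dense circle. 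Hence the directions $[(W^{n}f)(w_1):(W^{n}f)(w_2)]$ converge in the Riemann sphere. But if $f$ were supercyclic, $P$ would carry the dense projective orbit onto a dense subset of $\CC^2$, whereas its image is $\bigcup_{n}\CC\cdot P(W_{(u,\psi)}^{n}f)$, a countable union of complex lines with convergent directions, which is meagre in $\CC^2$. This contradiction rules out supercyclicity whenever $|a|<1$.

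For $|a|=1$ the iterates no longer converge and the ratio method collapses; here I would instead use the rigidity supplied by \eqref{weight}, namely $u(z)=u(0)K_{-\overline a b}(z)$. The key computation is that the substitution $\zeta=\psi(z)=az+b$ (which preserves $dA$ since $|a|=1$) turns the defining integral into
\[
\|W_{(u,\psi)}f\|_{p}^{p}=|u(0)|^{p}\,e^{p|b|^2/2}\,\|f\|_{p}^{p},
\]
so that $W_{(u,\psi)}=M(u,\psi)\,U$ with $M(u,\psi)=|u(0)|e^{|b|^2/2}$ and $U$ a surjective linear isometry of $\mathcal{F}_p$ (this is consistent with, and sharper than, the norm equality forced by \eqref{norm} when $|a|=1$). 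Since multiplying by a nonzero scalar does not change projective orbits, $W_{(u,\psi)}$ is supercyclic if and only if the invertible isometry $U$ is, and it is a known fact that no scalar multiple of an invertible isometry of an infinite dimensional Banach space can be supercyclic (for $p=2$ this is immediate, as $U$ is then unitary, hence normal, and normal operators are not supercyclic). The heuristic is that $\|U^{n}f\|$ is constant, so a dense projective orbit would force the non-compact unit sphere of $\mathcal{F}_p$ to be the closure of a single equicontinuous orbit. This disposes of all symbols with $|a|=1$.

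The step I expect to be the main obstacle is precisely this $|a|=1$ regime, and within it the purely translational case $a=1,\ b\neq 0$: there $\psi$ has no fixed point, so $W_{(u,\psi)}^{*}$ has empty point spectrum and every eigenvalue- or fixed-point-based method (including the otherwise convenient route of exhibiting two eigenvalues of the adjoint) breaks down. The resolution rests entirely on recognizing the exact isometry identity above and importing the non-supercyclicity of isometries; establishing that identity cleanly for all $1\le p<\infty$, rather than only for the Hilbert exponent $p=2$, via the area-preserving substitution is the technical heart. A secondary, more routine difficulty is the bookkeeping in the contracting case $|a|<1$ when $u$ or $f$ vanishes at $w_0$, where one must follow the orders of vanishing and select $w_1,w_2$ at different distances from $w_0$ to guarantee that the limiting direction degenerates to a single point.
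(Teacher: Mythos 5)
Your proposal is correct, and while it leans on the same two pillars as the paper (point evaluations plus the Ansari--Bourdon theorem), it distributes them differently. The paper treats the whole range $|a|<1$ or $|a|=1,\ a\neq 1$ by one fixed-point argument: after showing $u$ and a putative supercyclic vector $f$ are zero-free, it invokes Proposition~4 of Beltr\'an-Meneu--Jord\'a--Murillo-Arcila, which says the ratios $u_n(z)f(\psi^n(z))/\bigl(u_n(w)f(\psi^n(w))\bigr)$ must be dense in $\CC$ for distinct $z,w$; choosing $w=\psi(z)$ inside a compact $\psi$-invariant disc around $z_0=b/(1-a)$ makes the ratio telescope to $u(z)f(\psi^n(z))/\bigl(u(\psi^n(z))f(\psi^{n+1}(z))\bigr)$, which is uniformly bounded --- contradiction. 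Only $a=1$ is handled there by the isometry identity. Your split is different but complete: for $|a|<1$ you prove the directions $[(W^n f)(w_1):(W^n f)(w_2)]$ actually \emph{converge} in $\mathbb{CP}^1$ (a quantitative strengthening of the paper's boundedness, but one that genuinely needs the geometric contraction $\psi^j(w_i)\to w_0$, so it would not extend to $|a|=1,\ a\neq1$ as the paper's compact-invariant-disc trick does); and you compensate by running the isometry identity for \emph{all} unimodular $a$, not just $a=1$. Your computation $\|W_{(u,\psi)}f\|_p=|u(0)|e^{|b|^2/2}\|f\|_p$ via the area-preserving substitution $\zeta=az+b$ with $u=u(0)K_{-\overline{a}b}$ is correct and arguably cleaner than the paper's case split; it also repairs the paper's loose assertion that $W_{(u,\psi)}$ is ``normal'' (meaningless on $\mathcal{F}_p$ for $p\neq2$): the operative fact, exactly as you say, is that a supercyclic power-bounded operator has orbits tending to zero (Ansari--Bourdon), which rules out any nonzero scalar multiple of an invertible isometry, since scalar multiples have the same projective orbits.

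Two small repairs are needed in your $|a|<1$ half, neither fatal. First, ``meagre'' is not by itself in tension with density --- a countable union of complex lines \emph{can} be dense in $\CC^2$ if the directions are dense in $\mathbb{CP}^1$; what your argument actually delivers, and what you should say, is that convergence of the directions forces the closure of $\bigcup_n \CC\cdot P(W^n_{(u,\psi)}f)$ to be the union of those lines together with the limiting line, a proper closed subset of $\CC^2$, contradicting that the open surjection $P$ maps the dense projective orbit onto a dense set. Second, tracking orders of vanishing at $w_0$ is not quite enough: $u$ or $f$ may vanish at points of the forward orbits of $w_1,w_2$ away from $w_0$, killing numerator or denominator for all large $n$. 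This is handled either by choosing $w_1,w_2$ outside the countable set $\bigcup_{j\geq0}\psi^{-j}\bigl(Z(u)\cup Z(f)\bigr)$ (in addition to $|w_1-w_0|\neq|w_2-w_0|$), or by observing that if $u_n(w_i)$ vanishes eventually then the directions are eventually constant and the same non-density conclusion follows. With these adjustments your argument is a complete and valid alternative to the paper's proof.
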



As pointed above if $u= 1$, then $W_{(u,\psi)}$ is just the composition operator $C_\psi$. On the other hand, if $\psi$ is the identity map, then
                $W_{(u,\psi)}$ reduces to the multiplication operator $M_u$. With this, we obtain the following immediate consequence of Theorem~\ref{supercyclic}.
\begin{corollary}
Let $1\leq p<\infty$ and $(u,\psi)$ be a pair of entire functions on $\CC$, and let the operators  $C_\psi$ and $M_u$ be bounded on $\mathcal{F}_p$. Then, both  $C_\psi$ and $M_u$ are not supercyclic on $\mathcal{F}_p$.
 \end{corollary}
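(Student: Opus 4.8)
The plan is to exploit the rigidity of the symbol. Since $W_{(u,\psi)}$ is bounded, \eqref{bounded} forces $\psi(z)=az+b$ with $|a|\le 1$, and I would split the argument according to the value of $a$, because the dynamics of $\psi$ (contraction, elliptic rotation, translation, or constant map) are genuinely different. The one abstract input I would use is the classical necessary condition for supercyclicity: if $T$ is supercyclic on a Banach space, then the point spectrum of its adjoint $T^\ast$ contains at most one point. (If $\phi_1,\phi_2$ were linearly independent eigenvectors of $T^\ast$ with eigenvalues $\mu_1,\mu_2$, then the image of the projective orbit under $f\mapsto(\phi_1(f),\phi_2(f))$ would sit inside the countable union $\bigcup_n \CC\cdot(\mu_1^n\phi_1(f),\mu_2^n\phi_2(f))$ of complex lines in $\CC^2$, which is nowhere dense, contradicting density.) So for each bounded $W_{(u,\psi)}$ I would either exhibit two distinct eigenvalues of $W_{(u,\psi)}^\ast$, or show directly that the projective orbit lies in a nowhere dense set.

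The main case is $a\neq 0,1$, where $\psi$ has a unique fixed point $w_0=b/(1-a)$. I would test against the functionals $\Lambda_j(f)=f^{(j)}(w_0)$, which are bounded on $\mathcal F_p$ by \eqref{global} together with Cauchy estimates. Since $\psi$ is affine, $\tfrac{d^j}{dz^j}f(\psi(z))=a^j f^{(j)}(\psi(z))$, so Leibniz's rule gives $\Lambda_j(W_{(u,\psi)}f)=\sum_{i\le j}\binom{j}{i}a^i u^{(j-i)}(w_0)\Lambda_i(f)$. Thus each $\mathrm{span}\{\Lambda_0,\dots,\Lambda_N\}$ is $W_{(u,\psi)}^\ast$-invariant and $W_{(u,\psi)}^\ast$ is triangular there with diagonal entries $a^j u(w_0)$. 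If $u(w_0)\neq 0$, the two diagonal values $u(w_0)$ and $a\,u(w_0)$ are distinct (because $a\neq 1$), producing two eigenvalues of $W_{(u,\psi)}^\ast$ and hence non-supercyclicity. If instead $u(w_0)=0$, then \eqref{interplay} gives $(W_{(u,\psi)}^n f)(w_0)=u(w_0)^n f(w_0)=0$ for all $n\ge 1$, so the whole projective orbit lies in the nowhere dense set $\ker\Lambda_0\cup\CC f$.

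The two remaining degenerate cases are immediate from \eqref{interplay} and \eqref{weight}. If $a=0$, then $\psi\equiv b$ and \eqref{interplay} shows that $W_{(u,\psi)}^n f$ is a scalar multiple of $u$ for every $n\ge 1$, so the projective orbit sits inside $\CC f\cup\CC u$. If $a=1$ and $b=0$, then $\psi=\mathrm{id}$ and \eqref{weight} forces $u\equiv u(0)$, whence $W_{(u,\psi)}=u(0)I$; neither operator is supercyclic.

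I expect the crux to be the translation case $a=1,\ b\neq 0$: now $\psi$ has no fixed point, there is no jet at which to build eigenvectors, and the eigenvalue method is unavailable. Here I would instead use the explicit weight $u(z)=u(0)e^{-\bar b z}$ furnished by \eqref{weight}: a direct change of variables in the integral defining $\|\cdot\|_p$ shows $\|W_{(u,\psi)}f\|_p=M(u,\psi)\|f\|_p$, so $V:=M(u,\psi)^{-1}W_{(u,\psi)}$ is an isometry whose projective orbits coincide with those of $W_{(u,\psi)}$. The decisive estimate is that, for fixed $f\in\mathcal F_p$, one has $(V^n f)(0)=(\text{unimodular})\,e^{-|nb|^2/2}f(nb)\to 0$, which follows from \eqref{global} because $\int_{D(nb,1)}|f|^p e^{-p|w|^2/2}\,dA\to 0$. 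Evaluating any approximation $\|\lambda V^n f-1\|_p<\varepsilon$ of the constant function $1$ at the origin and comparing with $\|\lambda V^n f\|_p=|\lambda|\,\|f\|_p$ forces $|(V^n f)(0)|\ge \tfrac12\|f\|_p$; since this holds for only finitely many $n$, the admissible approximants lie in a finite union of complex lines and cannot be dense. This fixed-point-free, non-normal behaviour is precisely what blocks the spectral shortcut and forces reliance on the quantitative decay built into membership in $\mathcal F_p$.
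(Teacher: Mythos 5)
Your proposal is correct in substance but takes a genuinely different route from the paper. In the paper this corollary costs nothing: it is derived instantly from Theorem~\ref{supercyclic}, whose proof handles $a\neq 1$ by citing Proposition~4 of \cite{Belt} (supercyclicity of $W_{(u,\psi)}$ forces the ratios $u_n(z)f(\psi^n(z))/\big(u_n(w)f(\psi^n(w))\big)$ to be dense in $\CC$ for any $z\neq w$) and contradicting that density with a telescoping bound obtained by taking $w=\psi(z)$ on a $\psi$-invariant compact neighbourhood of the fixed point; the translation case $a=1$ is settled by showing $W_{(u,\psi)}$ is a scalar multiple of the invertible isometry $W_{(k_{-b},\psi)}$ and invoking the Ansari--Bourdon theorem \cite{AB}. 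You instead re-prove the full theorem: in the fixed-point case you triangularize the adjoint on the jet functionals $\Lambda_j(f)=f^{(j)}(w_0)$, producing the two distinct eigenvalues $u(w_0)$ and $a\,u(w_0)$ of $W_{(u,\psi)}^{\ast}$, and apply the classical fact that the adjoint of a supercyclic operator has at most one eigenvalue; in the translation case you replace Ansari--Bourdon by the quantitative decay $(V^nf)(0)=(\text{unimodular})\,e^{-|nb|^2/2}f(nb)\to 0$, which indeed follows from \eqref{global} and the tail of the norm integral, played against approximation of the constant function $1$ (your constants check out, using $\|1\|_p=1$ and $|g(0)|\leq\|g\|_p$). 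What your approach buys is self-containedness and portability: the eigenvalue argument works on any space where the jets at the fixed point are bounded functionals, and the decay argument is a concrete substitute for the isometry theorem; what the paper's route buys is brevity, since both external results are quoted rather than re-derived.

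Two repairs are needed, neither fatal. First, your parenthetical justification of the one-eigenvalue lemma is wrong as stated: a countable union of complex lines through the origin in $\CC^2$ \emph{can} be dense (take slopes dense in $\CC$), so ``nowhere dense'' is not the right reason. The correct argument exploits the geometric structure: when $\mu_1\neq 0$ and $\phi_1(f),\phi_2(f)\neq 0$, the slopes of your lines are $(\mu_2/\mu_1)^n\,\phi_2(f)/\phi_1(f)$, whose moduli form a geometric or constant sequence and hence are not dense in $\CC$; the degenerate cases $\phi_i(f)=0$ or $\mu_1=0$ trap the projective orbit in a closed hyperplane together with $\CC f$. Alternatively, simply cite the lemma, which is classical (Herrero; see also Bayart--Mathe\-ron, \emph{Dynamics of Linear Operators}, Proposition~1.26). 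Second, in the translation case you tacitly assume $u(0)\neq 0$ so that $V=M(u,\psi)^{-1}W_{(u,\psi)}$ makes sense as an isometry; if $u(0)=0$ then \eqref{weight} gives $u\equiv 0$ and $W_{(u,\psi)}=0$, which is trivially not supercyclic, so this case should be dispatched in a sentence.
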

 The  results above  assert that the projective orbit of any given vector under $W_{(u,\psi)}$ is not large  enough   to  contain the space $\mathcal{F}_p$. The weighted operators exhibit the  same  supercyclic   phenomena as the unweighted  composition operator $C_\psi$. Since the weight function had  no effect on this property here, it is  interesting to ask what type of general Fock spaces   do support suppercyclic  composition operators. When the weight functions generating the spaces grow faster than  the classical weight function $|z|^2/2 $, it was verified in \cite{TMMW} that the corresponding Fock-type spaces fail to support. If the weight functions grow slower than the classical case and  the derivative of the weight function grows to infinity, the approaches used in  \cite{TMMW} show that the same conclusion follows. On the other hand,  when  the weight function  is $\psi_m(z)= |z|^m, 0<m\leq 1$, then as shown in \cite{KG},  each nontrivial  translation operator acting on the corresponding Fock-type spaces is hypercyclic and hence supercyclic. Since a bounded $C_\psi$ on such spaces  happens if and only if $\psi(z)= az+b $ with $|a|\leq 1$, by setting $a=1$ and $b\neq 0$, we observe that $C_\psi$ reduces to the translation operator $T_b$ which is hypercyclic. This in fact gives spaces where the composition operator admits hypercyclic structure and hence is supercyclic. Whether this extends to the non-trivial  weighted composition operators remains to be studied. \\
 \ \ \\
 \noindent \textbf{Proof of Theorem~\ref{supercyclic}.}
  We now  give the proof of the theorem. Since $W_{(u,\psi)}$ is bounded, we set $\psi(z)= az+b$, with $|a|\leq 1$. For the case  $|a|<1$ or $|a|=1$ and $a\neq1$, we argue as follows. The map $\psi$ fixes the point $z_0= \frac{b}{1-a}$.   Assume on the contrary that there exists a supercyclic vector
$f$ in $\mathcal{F}_p$.  First we claim that $u$  is  zero free on $\CC$ because if u vanishes at the  point $w$, then  \eqref{interplay} implies that every element in the orbit of $f$ vanishes at $w$ which extends to the projective orbit which obviously  is not the case.  Observe also  that   $f$ can not have a zero in $\CC$. This is because  all the elements in the projective orbit will also vanish at a possible zero  which extends to the closure and  gives a contradiction.  Thus,  by Proposition~4 of \cite{Belt}, for any two different numbers $z, w\in \CC$,
\begin{align}
\label{dense}
\overline{\bigg\{\frac{u_n(z) f(\psi^n(z))}{u_n(w) f(\psi^n(w))}\bigg\}}= \CC.
\end{align}
Let $r>0$ be given. Then  $K= \{z\in \CC: |z-z_0|\leq r\}$ is a compact neighbourhood of $z_0$  which also contains $\psi(K)$ since  for each  $z\in K$
\begin{align*}
|\psi(z)-z_0|= |az+b-z_0| \leq |az-az_0|+|az_0-z_0+b| \leq |a|r\leq r.
\end{align*}
 Now, if we set $w= \psi(z), z\in K, z\neq z_0$ and consider the expression in \eqref{dense}
 \begin{align*}
 \bigg|\frac{u_n(z) f(\psi^n(z))}{u_n(w) f(\psi^n(w))}\bigg|=   \bigg|\frac{u(z) f(\psi^n(z))}{u(\psi^n(z)) f(\psi^{n+1}(z))}\bigg| \leq C
 \end{align*}  for all $n\in \NN$ where
 \begin{align*}
 C= \frac{\max_{z\in K}|u(z)|\cdot\max_{z\in K}|f(z)|}{\min_{z\in K}|u(z)|\cdot \min_{z\in K}|f(z)|}.
 \end{align*} This obviously  contradicts the relation in  \eqref{dense}.

It remains to prove the case for $a=1$.  This is rather immediate as  by \eqref{weight}, we have   $u(z)=  u(0) K_{-b}(z) $
and
\begin{align}
\label{normal}
W_{(u,\psi)}f= u(0) K_{-b} f(\psi)= u(0) \|K_{-b} \|_2k_{-b}   f(\psi)
=u(0) e^{\frac{|b|^2}{2}} W_{(k_{-b},\psi)} f.
\end{align}
Then we  show that $W_{(k_{-b},\psi)}$ is an isometry.   Thus, for $f \in \mathcal{F}_p$
\begin{align*}
   \|W_{(k_{-b},\psi)}f\|_p^p=  \frac{p}{2\pi} \|K_{-b}\|_2^{-p} \int_{\CC} |K_{-b}(z)|^p |f(z+b)|^p e^{-\frac{p}{2}|z|^2}dA(z)\quad \quad \quad \quad \\
   =\frac{p}{2\pi} \|K_{-b}\|_2^{-p} \int_{\CC}  |f(z+b)|^p e^{-\frac{p}{2}|z+b|^2} \Big( |K_{-b}(z)|^p e^{\frac{p}{2}|z+b|^2-\frac{p}{2}|z|^2}\Big) dA(z)\quad  \\
   = \|K_{-b}\|_2^{-p} e^{\frac{p|b|^2}{2}}\|f\|_p^p=\|f\|_p^p\quad \quad \quad \quad
   \end{align*} for all $1\leq p<\infty$ .  This shows  that the operator is a linear  isometry, and in fact one can easily show that it is also a  bijective  operator with inverse map  $W_{(k_{-b},\psi)}^{-1}= W_{(k_{b},\psi^{-1})}$.

 It follows from this,  the fact that $u(0)\neq 0$ and \eqref{normal} that  $W_{(u,\psi)}$ is a normal operator.  Then the conclusion  of supecyclicity
   follows directly from the well known Ansari-Bourdon's theorem \cite{AB} and  completes the proof.

 \section{The Ritt's resolvent growth condition for $W_{(u,\psi)}$  on $ \mathcal{F}_p$ }
 The  resolvent  growth condition for an operator  is another  dynamical structure closely related to its iterates, power boundedness and spectral properties. We recall that the resolvent function $R(\lambda, T)$  of an operator $ T$ defined on the resolvent set  $\varrho(T)$ is an operator valued function given by $R(\lambda, T)= (\lambda I- T)^{-1}. $  If  $T$ is power bounded and hence $\sigma(T)$ contained in the closed unit disc, then using the series  expansion of the resolvent function
\begin{align}
\label{Kreiss}
\|R(\lambda, T)\|= \bigg\|\sum_{n=0}^\infty \frac{T^n}{\lambda^{n+1}}\bigg\|\leq \sum_{n=0}^\infty \frac{\|T^n\|}{|\lambda|^{n+1}}= \frac{c}{|\lambda|-1}
\end{align} where $c:= \sup_{n\in\NN_0} \|T^n\|$.
Thus, every  power bounded operator satisfies the so-called Kreiss's resolvent condition. Conversely, for infinite dimensional spaces, condition \eqref{Kreiss} does not imply power boundedness as it only  gives   $\|T^n\|=O(n)$ as  $n\to \infty$; see for example \cite{SH}.   Now we are interested in a stronger resolvent growth condition that conversely  implies  power boundedness,  namely,  the Ritt's resolvent condition \cite{Ritt}. An operator $T$ satisfies such a  condition if there exists  a positive constant $M$ such that
\begin{align}
\label{Ritt}
\|R(\lambda, T)\|\leq \frac{M}{|\lambda-1|}
\end{align} for all $\lambda \in \CC$ with $ |\lambda|>1.$  Both the Kreiss's and Ritt's resolvent conditions   play important roles in numerical analysis; see \cite{BB} and the references therein  for more details.

We now state our next main result.
\begin{theorem}\label{Ritt}  Let $1\leq p<\infty$ and $(u,\psi)$ be a pair  of entire functions on $\CC$ which induces a bounded  weighted composition operator $W_{(u,\psi)}$ on $\mathcal{F}_p$ and hence $\psi(z)= az+b, \ |a|\leq 1$. Then if
\begin{enumerate}
\item $|a|= 1$, then  $W_{(u,\psi)}$ satisfies the Ritt's resolvent condition if and only if any one of the following holds: \begin{itemize}
    \item  $a=1$, $b=0$ and $|u(0)|<1$
    \item $a=1$, $b=0$ and $u(0)=1$
    \item   $a=1$, $b\neq0$ and $ |u(0)|< e^{\frac{-|b|^2}{2}}$
    \item
$a\neq 1$ and $ |u(0)|< e^{\frac{-|b|^2}{2}}$.
\end{itemize}

\item   $W_{(u,\psi)}$ is compact and  satisfies the Ritt's resolvent growth  condition, then   $|u\big(\frac{b}{1-a}\big)|\leq 1$. Conversely, if $|u\big(\frac{b}{1-a}\big)|<1$, then  $W_{(u,\psi)}$ satisfies the Ritt's resolvent growth  condition.
\end{enumerate}
\end{theorem}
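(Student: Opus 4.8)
The plan is to reduce both assertions to the standard characterization of Ritt operators (Nagy and Zem\'anek, Lyubich): a power-bounded operator $T$ on a Banach space satisfies the Ritt resolvent condition if and only if $\sup_{n\ge1}n\|T^n-T^{n+1}\|<\infty$. Since the Ritt condition in any case forces power boundedness, in each regime I first control the iterates $W_{(u,\psi)}^n=W_{(u_n,\psi^n)}$ and then test the difference condition. The dichotomy is dictated by $|a|$: for $|a|=1$ the operator is a scalar multiple of a linear isometry, whereas for $|a|<1$ (the compact case) I will show the spectral radius equals $|u(z_0)|$, where $z_0:=\frac{b}{1-a}$ is the attracting fixed point of $\psi$.

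For part (1), suppose $|a|=1$; the case $u\equiv0$ is trivial, so assume $u(0)\neq0$. By \eqref{weight} the weight is $u=u(0)K_{-\overline{a}b}$, and running the change of variables $w=\psi(z)=az+b$ (which, since $|a|=1$, preserves area and satisfies $|z|^2=|w-b|^2$) through the integral defining $\|W_{(u,\psi)}f\|_p^p$ — exactly as in the proof of Theorem~\ref{supercyclic}, now with a general unimodular $a$ — gives $\|W_{(u,\psi)}f\|_p=\gamma\|f\|_p$ with $\gamma:=|u(0)|e^{|b|^2/2}>0$. Thus $W:=W_{(u,\psi)}=\gamma V$ for a linear isometry $V$, so $\|W^n\|=\gamma^n$ and, using $\|V^nA\|=\|A\|$ for every bounded $A$,
\[
W^{n+1}-W^n=\gamma^n V^n(\gamma V-I),\qquad \|W^{n+1}-W^n\|=\gamma^n\,\|W-I\|.
\]
Reading off the characterization: if $\gamma<1$ then $n\gamma^n\|W-I\|\to0$ and $\|W^n\|\le1$, so $W$ is Ritt; if $\gamma>1$ then $W$ is not power bounded; and if $\gamma=1$ the difference condition holds iff $W=I$, i.e. iff $\psi=\mathrm{id}$ and $u\equiv1$. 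Since $\gamma<1$ reads $|u(0)|<1$ when $b=0$ and $|u(0)|<e^{-|b|^2/2}$ when $b\neq0$, and $W=I$ is exactly $a=1,b=0,u(0)=1$, this is precisely the stated list of four alternatives.

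For part (2), $W$ is compact, hence $|a|<1$ and $\psi^n(z_0)=z_0$. For the necessity, I test on the normalized kernel $k_{z_0}$: by \eqref{interplay} and the pointwise bound \eqref{global},
\[
\|W^n k_{z_0}\|_p\ \ge\ |(W^n k_{z_0})(z_0)|\,e^{-|z_0|^2/2}=|u(z_0)|^n,
\]
since $u_n(z_0)=u(z_0)^n$ and $k_{z_0}(z_0)=e^{|z_0|^2/2}$. Hence $\|W^n\|^{1/n}\ge|u(z_0)|$, so $r(W)\ge|u(z_0)|$; as the Ritt condition forces power boundedness and therefore $r(W)\le1$, we conclude $|u(z_0)|\le1$.

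For the converse assume $|u(z_0)|<1$; I claim $r(W)\le|u(z_0)|$, which gives $r(W)<1$ and then Ritt. If $Wf=\lambda f$ with $f\neq0$ and $\lambda\neq0$, write $f(z)=(z-z_0)^m h(z)$ with $m=\mathrm{ord}_{z_0}f$ and $h(z_0)\neq0$; since $\psi(z)-z_0=a(z-z_0)$, the eigenvalue relation $u(z)f(\psi(z))=\lambda f(z)$ collapses to $a^m u(z)h(\psi(z))=\lambda h(z)$, and evaluating at $z_0$ forces $\lambda=a^m u(z_0)$, so $|\lambda|\le|u(z_0)|$. As $W$ is compact, every nonzero point of $\sigma(W)$ is such an eigenvalue, whence $r(W)\le|u(z_0)|<1$; consequently $\|W^n\|\le C\rho^n$ with $\rho<1$, so $W$ is power bounded and $n\|W^n-W^{n+1}\|\to0$, making $W$ a Ritt operator. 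The crux of the argument is this upper bound $r(W)\le|u(z_0)|$: the estimate coming from \eqref{norm} carries the factor $|a|^{-2n/p}$ and blows up under iteration, so one cannot bound $\|W^n\|$ directly, and it is instead essential to use compactness to pass to the eigenvalue list, which the order-of-vanishing reduction at $z_0$ then pins down.
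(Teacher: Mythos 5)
Your proof is correct, but it takes a genuinely different and more self-contained route than the paper's. Both arguments pivot on the Nagy--Zem\'anek criterion (power boundedness plus condition \eqref{Ritt1}), but for the necessity in part (1) the paper invokes the spectral pictures \eqref{spectrum} and \eqref{spectrum2} from \cite{TW4} together with the Lyubich--Nevanlinna localization \eqref{cone}, whereas you bypass spectra entirely: the factorization $W_{(u,\psi)}=\gamma V$ with $V$ an isometry (your extension of the paper's $a=1$ isometry computation to all unimodular $a$ is valid, since $w=az+b$ preserves area and $|z|=|w-b|$) yields the exact identity $\|W^{n+1}-W^n\|=\gamma^n\|W-I\|$, from which the trichotomy $\gamma<1$, $\gamma=1$, $\gamma>1$ reproduces the four bullets, with the $\gamma=1$ case pinned down to $W=I$, i.e.\ $a=1$, $b=0$, $u(0)=1$. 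In part (2), your necessity via $\|W^n\|\geq\|W^n k_{z_0}\|_p\geq |u(z_0)|^n$ (using \eqref{global}, \eqref{interplay} and $u_n(z_0)=u(z_0)^n$) combined with power boundedness is if anything cleaner than the paper's lower bound \eqref{diff2}, whose right-hand side degenerates when $u(z_0)=1$; and for the converse, where the paper cites $\|W^n\|\simeq |u(z_0)|^n$ from \cite[Theorem 2.2]{TW4}, you instead re-derive the relevant half of \eqref{spectrum} by the order-of-vanishing argument at the fixed point (every nonzero eigenvalue equals $a^m u(z_0)$, by Riesz theory for compact operators), conclude $r(W)\leq |u(z_0)|<1$, and obtain geometric decay of $\|W^n\|$ from Gelfand's formula. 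What your route buys is independence from the external spectral computations of \cite{TW4} and from the Stolz-domain machinery of \cite{LY,NE}; what the paper's route buys is that the explicit spectra make borderline cases (e.g.\ unimodular $u(0)\neq 1$ with $a=1$, $b=0$) visible at a glance. A further small advantage on your side: the isometry treatment handles $a=1$, $b\neq 0$ uniformly, where the paper's sufficiency computation via $u(z_0)$ with $z_0=b/(1-a)$ is formally undefined.
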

\emph{Proof.}
In 1999, Nagy and Zemanek \cite{Nagy} proved  that a bounded  operator $T$ on complex Banach space satisfies the Ritt's resolvent growth condition if and only if it is
power bounded and the difference of  its consecutive iterates satisfy
\begin{align}
\label{Ritt1}
\sup_{n\in \NN}n\|T^{n+1}-T^n\| <\infty.
\end{align}
As before, we  let  $\psi(z)= az+b, \ |a|\leq 1, $ and set $z_0= \frac{b}{1-a}$ when $a\neq1$.
 From  \cite[Theorem~3.1]{TW4}, we recall the following results about the  spectrum of the operators. If
  $W_{(u,\psi)}$  is compact and hence $|a|<1$, then
 \begin{align}
 \label{spectrum}
\sigma(W_{(u,\psi)})=
\Big\{0, \  u\big(z_0\big) a^m, \ \  m\in \NN_0\Big\}.
\end{align}
On the other hand, if $|a|=1$, then
 \begin{align}
\label{spectrum2}
\sigma(W_{(u,\psi)})=
\begin{cases}
\overline{\Big\{ u\big(z_0\big) a^m: m\in \NN_0 \Big\}},  \ a\neq1 \\
  \left\{ z\in \CC: |z|  = |u(0)| e^{\frac{|b|^{2}}{2}}  \right\} , \ \ \ a=1, \ b\neq 0\\
\{u(0)\}, \ \ \ a=1, \ \ b= 0.
\end{cases}
\end{align}
Now from \cite{LY} and \cite[Theorem 4.5.4]{NE},   if $W_{(u,\psi)}$ satisfies the  Ritt's  growth  condition, then its spectrum is  contained in a stolz type domain $\overline{\beta_\theta}$ where $\beta_\theta$ is the interior of convex hull of the set $ \{1 \} $ and the disc $ \{ z\in \D: |z|\leq \sin\theta \}$ and $\theta= \arccos \frac{1} {M}$ where $M$ is the best possible constant in \eqref{Ritt} and $\theta\in [0,\frac{\pi}{2}).$  In particular, we have
\begin{align}
\label{cone} \sigma(W_{(u,\psi)})\cap \mathbb{T} \subset \{1\}
\end{align} where $\mathbb{T}$ denotes the unit circle.

(i) Since Ritt's condition implies power boundedness, by \cite[Theorem~2.1]{TW4},
\begin{align*}
|u(b/(1-a))|= |u(0)|e^{\frac{|b|^2}{2}} \leq 1.
\end{align*}
This together with \eqref{spectrum2} and \eqref{cone} implies  for $|a|=1$ that either $a=1$, $b=0$ and $|u(0)|\leq1$ or $u(0)=1$,  or $a=1$, $b\neq0$ and $ |u(0)|< e^{\frac{-|b|^2}{2}}$ or
$a\neq 1$ and $ |u(0)|< e^{\frac{-|b|^2}{2}}$.

 Conversely, if $a=1$,  $b= 0$ and $|u(0)|\leq 1$, then $W_{(u,\psi)}$ is trivially power bounded. Furthermore, $W_{(u,\psi)}$  reduces to  the multiplication operator $M_u$.   By  \cite[Lemma~2.3]{TM5}, it is known that  $M_u$  is bounded on $\mathcal{F}_p$  if and only if $u$ is a constant function. Thus, if $u= u(0)$, then $\|T^{n+1}-T^n\|= |u(0)|^n |1-u(0)|$ which implies that
   condition \eqref{Ritt1} holds whenever $u(0) =1$ or $|u(0)|<1.$
 On the other hand, if $|u(0)|\leq1$ or $a=1$, $b\neq0$ and $ |u(0)|< e^{\frac{-|b|^2}{2}}$ or
$a\neq 1$ and $ |u(0)|< e^{\frac{-|b|^2}{2}}$, then  applying \eqref{norm} and \eqref{weight}, we have
 \begin{align*}
\| W_{(u,\psi)}^n \|= |u(z_0)|^n= \Big |u(0)e^{\frac{a|b|^2}{a-1}}\Big|^n= \Big(|u(0)|  e^{|b|^2\Re\big(\frac{a }{a-1}\big)}\Big)^n \quad \quad \quad  \\
= |u(0)|^n  e^{n|b|^2\Re\big(\frac{a(\overline{a}-1) }{(a-1)(\overline{a}-1)}\big)}
= |u(0)|^n  e^{\frac{n|b|^2}{2}},
\end{align*}
from which it follows that
\begin{align*}
\sup_{n\in \NN}n\|W_{(u,\psi)}^{n+1}-W_{(u,\psi)}^n\|\leq  \sup_{n \in \NN}n \Big(|u(0)| e^{\frac{|b|^2}{2} }\Big)^n+\sup_{n \in \NN}n\Big(|u(0)| e^{\frac{|b|^2}{2} }\Big)^{n+1} <\infty.
\end{align*}
(ii) For $|a|<1$, first observe that for each $n\in \NN$,   the operator $W_{(u,\psi)}^n$ itself is a weighted composition operator  induced by the symbol $( u_n,\psi^n)$ and $W_{(u,\psi)}^n = W_{(u_n,\psi^n)} $. Aiming to check condition \eqref{Ritt1}, we first find an estimate for  the norm of the difference of the two weighted composition operators,
 \begin{align*}
 \big\|W_{(u,\psi)}^{n+1}-W_{(u,\psi)}^n\big\|= \big\| W_{(u_{n+1},\psi^{n+1})}-W_{(u_n,\psi^n)}\big\|.
 \end{align*}
  Applying the difference  to the normalized reproducing kernels $k_w$   and relation \eqref{global}
\begin{align}
\label{diff}
\big\| W_{(u_{n+1},\psi^{n+1})}-W_{(u_n,\psi^n)}\big\|\geq
 \big\|W_{(u_{n+1},\psi^{n+1})}k_w-W_{(u_n,\psi^n)}k_w\big\|_p\nonumber\\
 \geq \Big | u_{n+1}(z) e^{\overline{w}\psi^{n+1}(z)}-u_{n}(z) e^{\overline{w}\psi^{n}(z)}\Big| e^{-\frac{|z|^2+|w|^2}{2}}
\end{align} for all $w, z\in \CC$. In particular, setting  $z= w= z_0$ it readily follows from \eqref{diff} and   \eqref{interplay} that
\begin{align}
\label{diff2}
\big\| W_{(u_{n+1},\psi^{n+1})}-W_{(u_n,\psi^n)}\big\|\geq|u_n(z_0)| |u(z_0)-1| = |u(z_0)|^n |u(z_0)-1|,
\end{align}from which the relation  in \eqref{Ritt1} for  $\| W_{(u,\psi)}\|$ holds only if $|u(z_0)| \leq 1$.

Conversely, by  \cite[Theorem 2.2]{TW4},  $W_{(u,\psi)}$ is power bounded if and only if $|u(z_0)|\leq 1,$ and $\|W_{(u,\psi)}^n\|\simeq |u(z_0)|^n $. Then we repeat the arguments  made for $|a|=1,  a\neq 1$ to show that \eqref{Ritt1} holds whenever $|u(z_0)| <1$, and this completes the proof.

It would be desirable to know whether   the necessity  condition $|u(z_0)|= 1$ could  be sufficient as well. We in fact conjecture that it should be.  In the following we provide several results  in favor of  our conjecture.
\begin{corollary}
\label{cor1}
Let $1\leq p<\infty$ and $(u,\psi)$ be a pair  of entire functions on $\CC$ which induces a bounded  weighted composition operator $W_{(u,\psi)}$ on $\mathcal{F}_p$ and hence $\psi(z)= az+b, \ |a|\leq 1$. If $a=0$, then  $W_{(u,\psi)}$ satisfies the Ritt's resolvent condition on  $\mathcal{F}_p$ if and only if either $u(b)=1$
 or $|u(b)|<1$\end{corollary}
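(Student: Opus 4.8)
The plan is to exploit the fact that when $a=0$ the symbol $\psi(z)=b$ is constant, which collapses $W_{(u,\psi)}$ to a rank-one operator and makes its entire iterate structure completely explicit; this is precisely the sub-case of part (ii) of Theorem~\ref{Ritt} in which the boundary value $|u(z_0)|=1$ can be fully resolved. First I would record that $z_0=\frac{b}{1-a}=b$ here, and that $W_{(u,\psi)}f=(f\circ\psi)\,u=f(b)\,u$, so $W_{(u,\psi)}$ is the rank-one map $f\mapsto f(b)u$. Since $\psi$ is constant we have $\psi^n\equiv b$ for every $n\geq 1$, so from \eqref{interplay} the weight of the $n$-th iterate is $u_n=u\cdot u(b)^{n-1}$, which yields the clean operator identity
\begin{align*}
W_{(u,\psi)}^n = u(b)^{\,n-1}\,W_{(u,\psi)}, \qquad n\geq 1.
\end{align*}

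Writing $\lambda_0:=u(b)=u(z_0)$, I would then feed this identity into the Nagy--Zemanek criterion \eqref{Ritt1} already invoked in the proof of Theorem~\ref{Ritt}: the operator satisfies the Ritt condition if and only if it is power bounded and $\sup_{n}n\big\|W_{(u,\psi)}^{n+1}-W_{(u,\psi)}^n\big\|<\infty$. From the identity above, $\|W_{(u,\psi)}^n\|=|\lambda_0|^{n-1}\|W_{(u,\psi)}\|$, so power boundedness is equivalent to $|\lambda_0|\leq 1$; likewise $W_{(u,\psi)}^{n+1}-W_{(u,\psi)}^n=\lambda_0^{\,n-1}(\lambda_0-1)W_{(u,\psi)}$, whence
\begin{align*}
n\big\|W_{(u,\psi)}^{n+1}-W_{(u,\psi)}^n\big\| = n\,|\lambda_0|^{n-1}\,|\lambda_0-1|\,\|W_{(u,\psi)}\|.
\end{align*}

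Finally I would read off the conclusion from the asymptotics of $n\,|\lambda_0|^{n-1}\,|\lambda_0-1|$. If $|\lambda_0|<1$ this tends to $0$ and both Nagy--Zemanek conditions hold, so the Ritt condition is satisfied; if $|\lambda_0|=1$ with $\lambda_0\neq 1$ the operator is power bounded but the sequence grows like $n\,|\lambda_0-1|\to\infty$, so the condition fails; and if $\lambda_0=1$ the difference $W_{(u,\psi)}^{n+1}-W_{(u,\psi)}^n$ vanishes identically and the condition holds. The case $|\lambda_0|>1$ is excluded since power boundedness already fails. Collecting these, the Ritt condition holds precisely when $|u(b)|<1$ or $u(b)=1$, which is the claimed equivalence. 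I do not expect a genuine obstacle: the only step requiring real care is deriving the operator identity $W_{(u,\psi)}^n=u(b)^{n-1}W_{(u,\psi)}$ from \eqref{interplay}, after which everything reduces to elementary asymptotics. As an independent check one could instead compute the resolvent explicitly via the Sherman--Morrison formula, $R(\lambda,W_{(u,\psi)})=\lambda^{-1}I+\big(\lambda(\lambda-\lambda_0)\big)^{-1}W_{(u,\psi)}$, and analyze $\|R(\lambda,W_{(u,\psi)})\|$ directly for $|\lambda|>1$, but the iterate-difference route is shorter and reuses the machinery already in place.
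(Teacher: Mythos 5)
Your proposal is correct and follows essentially the same route as the paper: both rest on the Nagy--Zemanek criterion \eqref{Ritt1} and on the collapse of the iterates when $\psi\equiv b$ is constant, the paper obtaining necessity from the lower bound \eqref{diff2} and sufficiency from the estimate $\|W_{(u_{n+1},\psi^{n+1})}-W_{(u_n,\psi^n)}\|\leq |u(b)|^n|u(b)-1|$, which is exactly the content of your identity $W_{(u,\psi)}^n=u(b)^{n-1}W_{(u,\psi)}$. Your exact rank-one identity is a slight polish — it yields both directions at once and even the explicit resolvent — but the underlying argument is the same.
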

 \emph{Proof}. The necessity of the condition follows from \eqref{diff2} as a particular case. Thus, we  shall verify the sufficiency. A simple computation shows that for each  $f\in \mathcal{F}_p$
 \begin{align*}
 \big\| W_{(u_{n+1},\psi^{n+1})}f-W_{(u_n,\psi^n)}f\big\|_p^p \leq |u(b)|^{pn} \|f\|_p^p |u(b)-1|^p.
  \end{align*}
  It follows that
 \begin{align*}
 \big\| W_{(u_{n+1},\psi^{n+1})}-W_{(u_n,\psi^n)}\big\| \leq |u(b)|^n  |u(b)-1|
  \end{align*}
 from which it is easy to see that \eqref{Ritt1} holds.

Observe that  the multiplication operator $M_u$ is power bounded if and only if $|u(0)|\leq 1$. This  gives the following  consequence.
\begin{corollary}Let $1\leq p<\infty$ and $u$  be an entire functions on $\CC$.  Then a non-trivial   $M_u$   satisfies the Ritt's resolvent condition on $\mathcal{F}_p$ if and only if  it is power bounded.
    \end{corollary}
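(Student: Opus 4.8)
The plan is to deduce the corollary from part~(i) of Theorem~\ref{Ritt}, specialised to the composition symbol $\psi(z)=z$, for which $a=1$, $b=0$ and $W_{(u,\psi)}=M_u$. The starting observation is structural: by \cite[Lemma~2.3]{TM5} a bounded $M_u$ forces $u$ to be constant, so $u\equiv u(0)$ and $M_u=u(0)I$ is a scalar operator. Hence $M_u^n=u(0)^n I$, $\|M_u^n\|=|u(0)|^n$, and $M_u$ is power bounded precisely when $|u(0)|\le 1$, as recorded in the preceding remark.

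The core of the argument is to pin down when the scalar operator $M_u=u(0)I$ satisfies \eqref{Ritt}. I would do this directly: the resolvent is $R(\lambda,M_u)=(\lambda-u(0))^{-1}I$, so $\|R(\lambda,M_u)\|=|\lambda-u(0)|^{-1}$ and \eqref{Ritt} is equivalent to the elementary scalar estimate $\sup_{|\lambda|>1}|\lambda-1|\,|\lambda-u(0)|^{-1}<\infty$. This supremum is finite when $|u(0)|<1$ (the denominator is bounded below on $\{|\lambda|\ge 1\}$ while the ratio tends to $1$ at infinity) and when $u(0)=1$ (the ratio is identically $1$); letting $\lambda$ approach $u(0)$ radially from outside the unit circle shows it is infinite when $|u(0)|=1$ and $u(0)\ne 1$. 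The same dichotomy is, of course, exactly the $a=1,\,b=0$ branch of Theorem~\ref{Ritt}(i), so either route gives: $M_u$ satisfies the Ritt condition if and only if $|u(0)|<1$ or $u(0)=1$.

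It remains to assemble the equivalence. That Ritt implies power boundedness is automatic, being part of the Nagy--Zemanek characterisation \eqref{Ritt1} recalled in the proof of Theorem~\ref{Ritt}. For the converse, power boundedness yields $|u(0)|\le 1$; the only values consistent with this but excluded from the Ritt list are the unimodular constants $u(0)\ne 1$, i.e.\ the phase multiples $M_u=e^{i\theta}I$ of the identity, which are isometries---hence power bounded---yet fail \eqref{Ritt}. This exceptional boundary set is precisely what the non-triviality hypothesis removes, and once it is excluded the condition $|u(0)|\le 1$ leaves only $|u(0)|<1$ or $u(0)=1$, both of which are Ritt. The single delicate point in the whole proof is the identification of this unimodular case as the obstruction; everything else is the scalar resolvent computation above.
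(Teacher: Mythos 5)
Your proposal is correct, and its skeleton coincides with the paper's: the paper obtains the corollary as an immediate consequence of the $a=1$, $b=0$ branch of Theorem~\ref{Ritt}(i) together with the preceding observation that $M_u$ is power bounded if and only if $|u(0)|\leq 1$, which is exactly the deduction you assemble in your final paragraph. Where you genuinely differ is in re-deriving that branch by a direct scalar computation: since \cite[Lemma~2.3]{TM5} forces $u$ to be constant, $M_u=u(0)I$, so $\|R(\lambda,M_u)\|=|\lambda-u(0)|^{-1}$ and \eqref{Ritt} reduces to the elementary estimate $\sup_{|\lambda|>1}|\lambda-1|\,|\lambda-u(0)|^{-1}<\infty$, which you settle correctly (finite precisely when $|u(0)|<1$ or $u(0)=1$, infinite for unimodular $u(0)\neq 1$ by the radial limit argument). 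The paper instead proves sufficiency via the Nagy--Zemanek criterion \eqref{Ritt1}, computing $\|M_u^{n+1}-M_u^n\|=|u(0)|^n\,|1-u(0)|$, and necessity via the spectral containment \eqref{cone} drawn from \cite{LY} and \cite{NE}; your route is more elementary and self-contained, bypassing both pieces of machinery, at the small cost of working only for scalar operators. A further merit of your write-up is that you make explicit what ``non-trivial'' must exclude: the unimodular constants $u(0)=e^{i\theta}\neq 1$ yield power bounded isometries that fail \eqref{Ritt}, so the stated equivalence holds only once these boundary cases are removed --- a point the paper leaves implicit, and which your identification of the exceptional set resolves in the only reading under which the corollary is true.
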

 The composition operator is one of  the other cases where we  have  $u(z_0)=1$. For this we show that  the  Ritt's resolvent condition  is in fact equivalent to the stronger   unconditional Ritt's condition on $\mathcal{F}_p$.  We recall that an operator $T$ on a Banach space $\mathcal{X}$ satisfies the unconditional Ritt's condition if  there exists a non negative constant K such that
 \begin{align}
 \label{uncond}
 \bigg\|\sum_{n=1} a_n \big(T^n-T^{n-1}\big)\bigg \| \leq K\sup_n\{|a_n|\}
 \end{align} for any finite sequence $(a_n)$ of complex numbers. We note in passing that the notion of the unconditional Ritt's condition is the discrete analogue of the $H^\infty$ calculus for sectorial operators \cite{MN}.   N. Kalton and P. Portal \cite{KK} proved that  the unconditional Ritt's condition  implies the Ritt's resolvent condition in general, but not conversely. But for the composition operators, it turns out that the two are equivalent.
\begin{theorem}
Let $1\leq p<\infty$ and $\psi$ be an  entire function on $\CC$ which induces a bounded   composition operator $C_\psi$ on $\mathcal{F}_p$. Then the following are equivalent.
\begin{enumerate}
\item $C_\psi$ satisfies the Ritt's resolvent condition on $\mathcal{F}_p$;
\item $C_\psi$ is compact or  $C_\psi$ is the identity map on  $\mathcal{F}_p$;
\item $C_\psi$ satisfies the unconditional Ritt's condition on $\mathcal{F}_p$.
\end{enumerate}
 \end{theorem}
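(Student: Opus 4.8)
The plan is to establish the cycle of implications $(2)\Rightarrow(3)\Rightarrow(1)\Rightarrow(2)$. The implication $(3)\Rightarrow(1)$ is immediate from the general result of Kalton and Portal \cite{KK} that the unconditional Ritt condition always implies the Ritt resolvent condition, so no work is needed there. For $(1)\Rightarrow(2)$, I would run the spectral argument already used in the proof of Theorem~\ref{Ritt}. Since a bounded $C_\psi$ forces $\psi(z)=az+b$ with $|a|\le 1$ and here $u\equiv 1$, the case $|a|<1$ is exactly compactness and there is nothing to prove. If $|a|=1$, I would invoke \eqref{cone}, namely $\sigma(C_\psi)\cap\mathbb{T}\subseteq\{1\}$, together with the description \eqref{spectrum2}: for $a=1,\ b\neq0$ the spectrum is the circle of radius $e^{|b|^2/2}>1$, contradicting power boundedness; for $a\neq1,\ |a|=1$ the spectrum is $\overline{\{a^m\}}\subseteq\mathbb{T}$ and contains $a\neq1$, contradicting \eqref{cone}; this leaves only $a=1,\ b=0$, i.e. $C_\psi=I$.

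The core of the argument is $(2)\Rightarrow(3)$. When $C_\psi=I$ the differences $C_\psi^{n}-C_\psi^{n-1}$ all vanish and \eqref{uncond} holds trivially with $K=0$. So assume $C_\psi$ is compact, hence $|a|<1$; write $z_0=b/(1-a)$. By \eqref{spectrum}, $\sigma(C_\psi)=\{0\}\cup\{a^m:m\in\NN_0\}$, and $1=a^0$ is an isolated point of the spectrum. I would first check that $1$ is a simple eigenvalue: its eigenfunctions solve $f(\psi(z))=f(z)$, which after shifting the fixed point to the origin reads $g(aw)=g(w)$, forcing $g$ (hence $f$) constant since $|a|<1$; and a telescoping argument along the orbit $\psi^n(z)\to z_0$ rules out generalized eigenvectors, so the eigenspace is $E_1=\operatorname{span}\{1\}$ with $C_\psi|_{E_1}=I$.

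Next I would split $\mathcal{F}_p$ by the Riesz projection $P$ associated with the spectral point $1$, giving a $C_\psi$-invariant decomposition $\mathcal{F}_p=E_1\oplus M$ with $M=\ker P$ and $\sigma(C_\psi|_M)=\{0\}\cup\{a^m:m\ge1\}$, whose spectral radius is $|a|<1$. Fixing any $q\in(|a|,1)$, the spectral radius formula yields a constant $C_q$ with $\|(C_\psi|_M)^n\|\le C_q q^n$. Since $P$ commutes with $C_\psi$ and $C_\psi$ acts as the identity on $E_1$, one has $(C_\psi-I)\mathcal{F}_p\subseteq M$; consequently, for $n\ge1$,
\[
C_\psi^{n}-C_\psi^{n-1}=C_\psi^{n-1}(C_\psi-I)=(C_\psi|_M)^{n-1}(C_\psi-I).
\]
Then for any finite scalar sequence $(a_n)$,
\[
\Big\|\sum_n a_n\big(C_\psi^{n}-C_\psi^{n-1}\big)\Big\|\le\Big(\sum_n\|(C_\psi|_M)^{n-1}\|\Big)\|C_\psi-I\|\sup_n|a_n|\le\frac{C_q}{1-q}\,\|C_\psi-I\|\,\sup_n|a_n|,
\]
which is exactly \eqref{uncond}. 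The same geometric decay gives $n\|C_\psi^{n+1}-C_\psi^{n}\|\lesssim nq^n\to0$, so \eqref{Ritt1} and hence the Ritt condition also follow directly, should one prefer to bypass \cite{KK}.

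I expect the main obstacle to be the spectral bookkeeping in the compact case on the Banach space $\mathcal{F}_p$, $p\neq2$: verifying that $1$ is an algebraically simple eigenvalue (no Jordan part), that the Riesz decomposition separates $\{1\}$ from the rest of the spectrum with $\sigma(C_\psi|_M)$ of radius $|a|<1$, and that $(C_\psi-I)\mathcal{F}_p\subseteq M$. Once this spectral gap is in place the geometric-series estimate is routine, and the exponential decay $q^n$ is in fact far stronger than the $O(1/n)$ needed for \eqref{Ritt1}; the delicate point is purely that the constant eigenfunction carries the unique unimodular spectral value while everything else lies strictly inside the disc.
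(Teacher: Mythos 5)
Your proposal is correct, and for the crucial implication (ii) $\Rightarrow$ (iii) it takes a genuinely different route from the paper. The paper argues by hard analysis: it applies the Mean Value Theorem to $C_{\psi^{n+1}}f-C_{\psi^n}f$, derives the pointwise derivative bound $|f'(w)|\le(\sqrt{2\pi}+1)(1+|w|)e^{|w|^2/2}\|f\|_p$ from the reproducing formula, and then verifies --- the delicate step, inequality \eqref{negative} --- that the Gaussian integral $I_n$ in \eqref{conv} is bounded uniformly in $n$, arriving at the sharp exponential decay $\|C_{\psi^{n+1}}-C_{\psi^n}\|\le C^{1/p}|a|^n$ of \eqref{expon}, whose summability gives \eqref{uncond}. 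You instead argue softly: by \eqref{spectrum}, $1$ is an isolated spectral point of the compact operator $C_\psi$; your shift-to-the-fixed-point argument correctly identifies the eigenspace as the constants, and the telescoping identity $g(\psi^n(z))-g(z)=nc$ together with $\psi^n(z)\to z_0$ correctly rules out generalized eigenvectors, so the Riesz projection splits off $E_1=\mathrm{span}\{1\}$, the spectral radius formula yields $\|(C_\psi|_M)^n\|\le C_q q^n$ for any $q\in(|a|,1)$, and since $(C_\psi-I)\mathcal{F}_p\subseteq M$ the differences $C_\psi^n-C_\psi^{n-1}=(C_\psi|_M)^{n-1}(C_\psi-I)$ decay geometrically, from which \eqref{uncond}, \eqref{Ritt1} and power boundedness all follow. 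Your route is shorter and more conceptual: it needs only the spectrum formula \eqref{spectrum} plus standard Riesz theory for compact operators on a Banach space, and it sidesteps the paper's function-theoretic integral estimates entirely, at the modest cost of explicit constants and the exact rate (you get $q^n$ for any $q>|a|$ rather than $|a|^n$, which is immaterial here). Two small remarks: in your (1) $\Rightarrow$ (2) analysis the cases $|a|=1$, $b\neq 0$ are vacuous, since boundedness of the unweighted $C_\psi$ with $|a|=1$ already forces $b=0$ (the paper recalls exactly this), so invoking \eqref{spectrum2} there is harmless but unnecessary; and the degenerate case $a=0$, where $C_\psi$ is a rank-one idempotent so that the differences vanish for $n\ge 1$, is in any event absorbed by your spectral-radius bound.
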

 \begin{proof}
  For p=2, the result was proved in \cite{TM8}. We now modify the  proof for all $p's$ and first show that the statements in  (i) and (ii) are equivalent. Recall that $C_\psi$  is bounded if and only if   $\psi(z)= az+b, \ |a|\leq 1$ and $b=0$ whenever $|a|=1$. Compactness is described by the strict inequality $|a|<1$. If $|a|=1$, then by Theorem~\ref{Ritt} the composition operator $ C_\psi$  satisfies the Ritt's resolvent condition on $\mathcal{F}_p$ if and only if $a=1,$ which means $\psi$ reduces to the identity operator. Thus,  we shall  proceed to show the equivalency for the case when $|a|<1$. The statement (i) implies (ii) follows again from part (ii) of Theorem~\ref{Ritt}.   We need to prove the converse.
 For each $z\in \CC$ and $n\in \NN$, let $\delta_n$ be a positive number  such that $a^{n+1}z+\frac{b(1-a^{n+1})}{1-a}\in D\big(a^nz+ \frac{b(1-a^n)}{1-a}, \delta_n\big)$.  An explicit expression for $\delta_n$ will be given latter. Then for any $f\in \mathcal{F}_p$, by the Mean Value Theorem
\begin{align}
\label{allp}
\big| C_{\psi^{n+1}}f(z)-C_{\psi^n}f(z)\big|^p = \bigg| f\bigg(a^{n+1}z+\frac{b(1-a^{n+1})}{1-a}\bigg)- f\bigg( a^nz+ \frac{b(1-a^n)}{1-a} \bigg)\bigg|^p \nonumber\\
\leq
 \Big|a^{n+1}z+\frac{b(1-a^{n+1})}{1-a}- a^nz- \frac{b(1-a^n)}{1-a}\Big|^p \sup_{w\in D\big(a^nz+ \frac{b(1-a^n)}{1-a}, \beta_n\big)} |f'(w)|^p\quad \quad \nonumber\\
= |a^n z (a-1)+ba^n|^{p} \sup_{w\in D\big(a^nz+ \frac{b(1-a^n)}{1-a}, \beta_n\big) } |f'(w)|^p \quad \quad \quad \quad
\end{align} for some positive  $\beta_n<\delta_n$ which will also  be  fixed latter.\\
Using the reproducing property,
\begin{align*}
f(w)=\langle f, K_w\rangle= \frac{1}{\pi} \int_{\CC} f(z) e^{w\overline{z}}e^{-|z|^2} dA(z)
\end{align*} and
\begin{align*}
|f'(w)|\leq \frac{1}{\pi} \int_{\CC} |z||f(z)| e^{\Re(w\overline{z})}e^{-|z|^2} dA(z)\quad \quad \quad \quad \quad \quad \quad \quad \quad \\
\leq \frac{1}{\pi}\bigg(\sup_{z\in \CC} |f(z)| e^{-\frac{|z|^2}{2}}\bigg) \int_{\CC} |z| e^{\Re(w\overline{z})}e^{-\frac{|z|^2}{2}} dA(z)\\
\leq\frac{1}{\pi}\|f\|_p \int_{\CC}|z| e^{\Re(w\overline{z})}e^{-\frac{|z|^2}{2}} dA(z)
\leq\frac{1}{\pi}e^{\frac{|w|^2}{2}}\|f\|_p \int_{\CC} (1+|z|) e^{-\frac{|z-w|^2}{2}} dA(z).
\end{align*}
On the other hand,
\begin{align*}
1+|z|\leq  1+|w-z|+|w|\leq \big(1+|w-z|\big) (1+|w|)
\end{align*} and hence
\begin{align*}
\frac{1}{\pi} \int_{\CC} (1+|z|) e^{-\frac{|z-w|^2}{2}} dA(z)\leq (1+|w|) \frac{1}{\pi} \int_{\CC}(1+|w-z|) e^{-\frac{|z-w|^2}{2}} dA(z  \\
= (1+|w|)\left(1+ 2^{\frac{3}{2}}\Gamma\Big(1+\frac{1}{2}\Big)\right)=   (\sqrt{2\pi}+1)(1+|w|) \quad \quad
\end{align*} where $\Gamma$ refers to the gamma function. Thus, we deduce
\begin{align*}
|f'(w)| \leq  (\sqrt{2\pi}+1)(1+|w|)e^{\frac{|w|^2}{2}} \|f\|_{p}.
\end{align*}
Now, since $w\in D\big(a^nz+ \frac{b(1-a^n)}{1-a}, \beta_n\big)$ we have
\begin{align*}
|w|\leq \bigg|w-a^nz+ \frac{b(1-a^n)}{1-a}\bigg|+\bigg|a^nz+ \frac{b(1-a^n)}{1-a}\bigg|\quad \quad \quad \quad \quad \quad \quad \quad \quad \quad \quad  \\
\leq \beta_n+ \Big|a^nz+ \frac{b(1-a^n)}{1-a}\Big|\leq \beta_n+|a|^n|z|+\frac{2|b|}{|1-a|}=:\alpha_n(z)\quad \quad \quad \quad \quad
\end{align*}
Now taking all these estimates in \eqref{allp}
\begin{align*}
\frac{1}{\pi}\int_{\CC}\big| C_{\psi^{n+1}}f(z)-C_{\psi^n}f(z)\big|^p e^{-\frac{p}{2}|z|^2}dA(z) \leq \quad \quad \quad \quad \quad \quad \quad \quad\quad \quad \quad \quad\quad \quad \quad \quad\nonumber \\
2|a|^{pn} \|f\|_{p}^{p} (\sqrt{2\pi}+1)^{p} \int_{\CC} |(a-1)z+b|^p |1+\alpha_n(z)|^p  e^{\frac{p}{2}(\alpha_n^2(z)-|z|^2)} dA(z) \\
\leq 2|a|^{pn} \|f\|_{p}^{p} (\sqrt{2\pi}+1)^{p} I_n \quad \quad \quad \quad \quad \quad \quad \quad \quad \quad
\end{align*} where
\begin{align}
\label{conv}
I_n=\int_{\CC}\Big( |z|^2+|z|^2\alpha_n^2(z)+|b|^2+ |b|^2\alpha_n^2(z) \Big) e^{\frac{p}{2}(\alpha_n^2(z)-|z|^2)} dA(z).
 \end{align}
 Next,  we show that the integral above is uniformly bounded by a positive number independent of  $n\in \NN$.  To this end, observe that   since  $a^{n+1}z+\frac{b(1-a^{n+1})}{1-a}\in D\big(a^nz+ \frac{b(1-a^n)}{1-a}, \delta_n\big)$
 \begin{align*}
 \bigg|a^{n+1}z+\frac{b(1-a^{n+1})}{1-a}- a^nz- \frac{b(1-a^n)}{1-a}\bigg|= |a^n z (a-1)+ba^n|\quad \quad \quad \\
 \leq |a|^n |a-1||z|+|a|^n|b|
 \end{align*}   $\delta_n$ can be  taken to be $|a|^n |a-1||z|+|a|^n|b|$. Thus,  $\beta_n$ can also be chosen in such a way that $\beta_n=\alpha_n |z|+|a|^n|b|$ for some  $0<\alpha_n< |a|^n |a-1|$  whenever  $z\neq 0$.
 Then
 \begin{align}
 \label{esti}
 \alpha_n^2(z)
 < \bigg( |a|^n( |a-1|+1)|z| +|a|^n|b| +\frac{2|b|}{|1-a|}\bigg)^2 = |a|^{2n}( |a-1|+1)^2 |z|^2 \nonumber\\
 + 2 |a|^{n}|z|( |a-1|+1)\bigg(|a|^n|b| +\frac{2|b|}{|1-a|}\bigg)
 +  \bigg(|a|^n|b| +\frac{2|b|}{|1-a|}\bigg)^2.
 \end{align}
Given the estimate in \eqref{esti} and  the exponential integrating weight in \eqref{conv}, we claim that
 \begin{align}
 \label{negative}
 |a|^{2n}\big( |a-1|+1\big)^2-1 \leq 0
 \end{align} for all $n$.  Setting $a= |a|e^{i\theta}$  we  write
\begin{align*}|a|^{2n}\big(( |a-1|+1\big)^2-1= |a|^{2n}\Big( |a|\sqrt{(\cos\theta-1)^2+(\sin\theta)^2}+1\Big)^2-1.
\end{align*}
   Thus, it is enough to show that $|a|^{2}(2|a|+1)^2-1 \leq 0$. But this obviously holds  since  $r=|a|<1$ and the values  of the function $g(r)= r^2(2r+1)^2-1$ lie in the interval $[-1,0]$.
  Now, integrating with polar coordinates shows that the integral in  \eqref{conv}   is  uniformly bounded by positive number  $C$ independent of $n$. Therefore,
  \begin{align}
   \label{expon}
  \big\| C_{\psi^{n+1}}-C_{\psi^n}\big\|^{p} \leq C  |a|^{pn}.
  \end{align} This together with $|a|<1$, the relation in \eqref{Ritt1} holds for $C_{\psi}$.

Next we show that the statements in (i) and (iii) are also equivalent.  From the discussion above, we have already mentioned that (iii) implies (i). Thus, it remains to show (i) implies (iii).  But this is rather immediate since by \eqref{expon},  we observe that $\|C_{\psi^n}-C_{\psi^{n+1}}\|$ is bounded by an exponentially decreasing sequence with $n$. Thus
\begin{align*}
\bigg\|\sum_{n=1} a_n \big(C_{\psi^n}- C_{\psi^{n-1}}\big)\bigg \| \leq \sup_n\{|a_n|\}\sum_{n=1} \|C_{\psi^n}-C_{\psi^{n-1}}\|\lesssim  \sup_n\{|a_n|\}.
\end{align*}
\end{proof}

\end{document}